\begin{document}
\def\bgamma{\boldsymbol{\gamma}}
\let\kappa=\varkappa
\let\epsilon=\varepsilon
\let\phi=\varphi
\let\p\partial

\def\Z{\mathbb Z}
\def\R{\mathbb R}
\def\C{\mathbb C}
\def\Q{\mathbb Q}
\def\P{\mathbb P}
\def\N{\mathbb N}
\def\L{\mathbb L}
\def\HH{\mathrm{H}}
\def\ss{X}

\def\conj{\overline}
\def\Beta{\mathrm{B}}
\def\const{\mathrm{const}}
\def\ov{\overline}
\def\wt{\widetilde}
\def\wh{\widehat}

\renewcommand{\Im}{\mathop{\mathrm{Im}}\nolimits}
\renewcommand{\Re}{\mathop{\mathrm{Re}}\nolimits}
\newcommand{\codim}{\mathop{\mathrm{codim}}\nolimits}
\newcommand{\id}{\mathop{\mathrm{id}}\nolimits}
\newcommand{\Aut}{\mathop{\mathrm{Aut}}\nolimits}
\newcommand{\lk}{\mathop{\mathrm{lk}}\nolimits}
\newcommand{\sign}{\mathop{\mathrm{sign}}\nolimits}
\newcommand{\pt}{\mathop{\mathrm{pt}}\nolimits}
\newcommand{\rk}{\mathop{\mathrm{rk}}\nolimits}
\newcommand{\SKY}{\mathop{\mathrm{SKY}}\nolimits}
\newcommand{\st}{\mathop{\mathrm{st}}\nolimits}
\def\Jet{{\mathcal J}}
\def\FC{{\mathrm{FCrit}}}
\def\sS{{\mathcal S}}
\def\lcan{\lambda_{\mathrm{can}}}
\def\ocan{\omega_{\mathrm{can}}}

\renewcommand{\mod}{\mathrel{\mathrm{mod}}}
\def\ds{\displaystyle}

\newtheorem{mainthm}{Theorem}
\newtheorem{thm}{Theorem}[section]
\newtheorem{lem}[thm]{Lemma}
\newtheorem{prop}[thm]{Proposition}
\newtheorem{cor}[thm]{Corollary}
\newtheorem{conjecture}[thm]{Conjecture}

\theoremstyle{definition}
\newtheorem{exm}[thm]{Example}
\newtheorem{rem}[thm]{Remark}
\newtheorem{df}[thm]{Definition}

\renewcommand{\thesubsection}{\arabic{subsection}}
\numberwithin{equation}{subsection}

\title{Causality and Legendrian linking for higher dimensional spacetimes }
\author[Chernov]{Vladimir Chernov}
\thanks{This work was partially supported by a grant from the Simons Foundation
(\# 513272 to Vladimir Chernov). The author is thankful to Stefan Nemirovski for many useful discussions.}
\address{Department of Mathematics, 6188 Kemeny Hall,
Dartmouth College, Hanover, NH 03755, USA}
\email{Vladimir.Chernov@dartmouth.edu}

\begin{abstract}
Let $(\ss^{m+1}, g)$ be an $(m+1)$-dimensional globally hyperbolic spacetime with Cauchy surface $M^m$, and let $\widetilde M^m$ be the universal cover of the Cauchy surface.
Let $\mathcal N_{\ss}$ be the contact manifold of all future directed unparameterized light rays in $\ss$ that we identify with the spherical cotangent bundle $ST^*M.$
Jointly with Stefan Nemirovski we showed that when $\widetilde M^m$ is {\bf not\/} a compact manifold, then two points $x, y\in \ss$ are causally related if and only if the Legendrian spheres $\mathfrak S_x, \mathfrak S_y$ of all light rays through $x$ and $y$ are linked in $\mathcal N_{\ss}.$

In this short note we use the contact Bott-Samelson theorem of Frauenfelder, Labrousse and Schlenk to show that the same statement is true for all $\ss$ for which the integral cohomology ring of a closed $\widetilde M$ is {\bf not} the one of the CROSS (compact rank one symmetric space).

If $M$ admits a Riemann metric $\ov g$, a point $x$ and a number $\ell>0$ such that all unit speed geodesics starting from $x$ return back to $x$ in time $\ell$, then $(M, \ov g)$ is called a $Y^x_{\ell}$ manifold.  Jointly with Stefan Nemirovski  we observed that causality in $(M\times \R, \ov g\oplus -t^2)$ is {\bf  not} equivalent to Legendrian linking. Every $Y^x_{\ell}$-Riemann manifold has compact universal cover and its integral cohomology ring is the one of a CROSS. So we conjecture that Legendrian linking is equivalent to causality if and only if  one can {\bf not} put a $Y^x_{\ell}$ Riemann metric on a Cauchy surface $M.$
\end{abstract}

\maketitle

All manifolds, maps etc.~are assumed to be smooth unless the opposite is explicitly stated,
and the word {\it smooth\/} means $C^{\infty}$.

\section{Introduction}
Let $M$ be a not necessarily orientable, connected manifold of dimension $m\ge 2$
and let $\pi_M:ST^*M\to M$ be its spherical cotangent bundle.
The manifold $ST^*M$ carries a canonical co-oriented contact structure.
An isotopy $\{L_t\}_{t\in[0,1]}$ of Legendrian submanifolds in a co-oriented
contact manifold is called respectively {\it positive, non-negative \/} if it can be parameterised
in such a way that the tangent vectors of all the trajectories of individual points
lie in respectively positive, non-negative tangent half-spaces defined by the contact structure.

For the introduction of basic notions from Lorentz geometry we follow our paper~\cite{ChernovNemirovskiGAFA}.

Let $(\ss^{m+1}, g)$ be an $(m+1)$-dimensional Lorentz manifold and $x\in \ss$. A nonzero ${\bf v}\in T_x\ss$ 
is called {\it timelike, spacelike, non-spacelike (causal) or null (lightlike)\/} if $g({\bf v}, {\bf v})$ is respectively 
negative, positive, non-positive or zero. An piecewise smooth curve is timelike if all of its velocity 
vectors are timelike. Null and non-spacelike curves are defined similarly. The Lorentz manifold $(\ss, g)$ has a unique 
Levi-Cevita connection, see for example~\cite[page 22]{BeemEhrlichEasley}, so we can talk about timelike and null geodesics. A submanifold $M\subset \ss$ is {\it spacelike} if $g$ restricted to $TM$ is a 
Riemann metric.

All non-spacelike vectors in $T_x\ss$ form a cone consisting of two hemicones, and a continuous with respect to $x\in \ss$ choice of one of the two hemicones (if it exists) is called the {\it time orientation\/} of
$(\ss, g)$. The vectors from the chosen hemicones are called {\it future directed.\/}
A time oriented Lorentz manifold is called a {\it spacetime\/} and its points are called {\it events.\/}

For $x$ in a spacetime $(\ss, g)$ its {\it causal future\/} $J^+(x)\subset \ss$ 
is the set of all $y\in \ss$ that can be reached by a 
future pointing causal curve from $x.$ 
The causal past $J^-(x)$ of the event $x\in \ss$ is defined similarly.

Two events  $x,y$ are said to be {\it causally related\/} if $x\in J^+(y)$ or $y\in J^+(x).$

A spacetime is said to be {\it globally hyperbolic\/} if $J^+(x)\cap J^-(y)$ is compact for every $x,y\in \ss$
and if it is {\it causal,\/} i.e.~it has no closed non-spacelike curves. The classical definition 
of global hyperbolicty requires $(\ss,g)$ to be strongly causal rather than just causal, but these two definitions are equivalent, see Bernal and Sanchez~\cite[Theorem 3.2]{BernalSanchezCausal}.

A {\it Cauchy surface\/} in $(\ss, g)$ is a subset such that every inextendible nonspacelike curve 
$\gamma(t)$ intersects it at exactly one value of $t.$ A classical result is that $(\ss, g)$ is globally 
hyperbolic if and only if it has a Cauchy surface, see~\cite[pages 211-212]{HawkingEllis}. 
Geroch~\cite{Geroch} proved that every globally hyperbolic $(\ss, g)$ is homemorphic to a product of $\R$ 
and a Cauchy surface. Bernal and Sanchez~\cite[Theorem 1]{BernalSanchez},~\cite[Theorem
1.1]{BernalSanchezMetricSplitting},~\cite[Theorem 1.2]{BernalSanchezFurther} proved that every globally 
hyperbolic $(\ss^{m+1}, g)$ has a smooth spacelike Cauchy surface $M^m$ and that moreover
for every smooth spacelike Cauchy surface $M$ there is a
diffeomorphism  $h:M\times \R\to \ss$ such that 
\begin{description}
\item[a] $h(M\times t)$ is a smooth spacelike Cauchy surface
for all $t$, 
\item[b] $h(x\times \R)$ is a future directed timelike curve for all $x\in M$, and finally 
\item[c] $h(M\times 0)=M$ with $h|_{M\times 0}:M\to M$ being the identity map.
\end{description}

For a spacetime $\ss$ we consider its space of light rays $\mathfrak N=\mathfrak N_X$.
By definition, a point $\bgamma\in\mathfrak N$ is an equivalence class of inextendible
future-directed null geodesics up to an orientation preserving affine reparametrization.

A seminal observation of Penrose and Low~\cite{PR2, Lo1, Lo2} is that the space $\mathfrak N$
has a canonical structure of a contact manifold (see also \cite{NatarioTod, KT, BIL, ChernovNemirovskiRedShift}). 
A contact form $\alpha_M$ on $\mathfrak N$ 
defining that contact structure can be associated to any smooth spacelike Cauchy 
surface~$M\subset X$. Namely, consider the map 
$$
\iota_M:\mathfrak N_X \mathrel{\lhook\joinrel\longrightarrow} T^*M
$$
taking $\bgamma\in\mathfrak N$ represented by a null geodesic~$\gamma\subset X$
to the $1$-form on $M$ at the point $x=\gamma\cap M$ 
collinear to ${\langle\dot\gamma(x),\cdot\,\rangle|}_M$
and having unit length with respect to the induced Riemann metric on~$M$.
This map identifies $\mathfrak N$ with the unit cosphere bundle $ST^*M$
of the Riemannian manifold $M.$
Then 
$$
\alpha_M := \iota_M^*\,\lambda_{\mathrm{can}},
$$
where $\lambda_{\mathrm{can}}=\sum p_kdq^k$ is the canonical Liouville $1$-form on $T^*M$.

\begin{rem}[Bott-Samelson type result of Frauenfelder, Labrousse and Schlenk and its strengthening]\label{BSFLS}
The contact Bott-Samelson~\cite{BottSamelson} type result of Frauenfelder, Labrousse and Schlenk \cite[Theorem 1.13]{FLS} says that if there is a positive Legendrian isotopy of a fiber $S_x$ of $ST^*M$ to itself,  then the universal cover $\widetilde M$ of $M$ is compact and has the integral cohomology ring of a CROSS. (This result answers our question with Nemirovski~\cite[Example 8.3]{ChernovNemirovskiGT} and compactness of $\widetilde M$ was first aproved in~\cite[Corollary 8.1]{ChernovNemirovskiGT}.)

Our result with Nemirovski~\cite[Proposition 4.5]{ChernovNemirovskiJSG} says that if there is a non-constant non-negative Legendrian isotopy of $S_x$ to itself, then there is a positive Legendrian isotopy of $S_x$ to itself. (Note that this positive Legendrian isotopy generally is not a perturbation of the non-negative non-constant Legendrian isotopy that was assumed.) 

So we can somewhat strengthen~\cite[Theorem 1.13]{FLS} to say that if there is a non-constant non-negative Legendrian isotopy of $S_x$ to itself,  then the universal cover $\widetilde M$ of $M$ is compact and has the integral cohomology ring of a CROSS, and in this work we will have to use the strengthened version of the contact Bott-Samelson theorem.
\end{rem}

\section{Main Results}
Let $(\ss,g)$ be a globally hyperbolic spacetime with Cauchy surface $M.$
For a point $x\in \ss$ we denote by $\mathfrak S_x\subset \mathfrak N$ the Legendrian sphere of all (unparameterized, future directed) light rays passing through $x.$

For two causally unrelated points $x,y\in \ss$ the Legendrian link $(\mathfrak S_x, \mathfrak S_y)$ in $\mathcal N$ does not depend on the choice of the causally unrelated points. Under the identification $\mathcal N=ST^*M$  this link is Legendrian isotopic to the link of sphere-fibers over two points of some (and then any) spacelike Cauchy surface $M,$ see~\cite[Theorem 8]{ChernovRudyak},~\cite[Lemma 4.3]{ChernovNemirovskiGAFA} and~\cite{NatarioTod}. We call a Legendrian link {\it trivial\/} if it is isotopic to such a link.

In~\cite[Theorem A]{ChernovNemirovskiGAFA} and~\cite[Theorem 10.4]{ChernovNemirovskiGT} we proved the following result. Assume that the universal cover $\widetilde M$ of a Cauchy surface of $M$ of $\ss$ is not compact and events $x,y\in\ss$ are causally related. Then the Legendrian link $(\mathfrak S_x, \mathfrak S_y)$ is nontrivial. 

In the case where 
$M=\R^3$ this proved the Legendrian Low conjecture of Natario and Tod~\cite{NatarioTod}. The question to explore relations between causality  and linking was motivated by the observations of Low~\cite{Lo1, Lo2} and appeared on Arnold's problem lists
as a problem communicated by Penrose~\cite[Problem~8]{ArnoldProblem},
\cite[Problem 1998-21]{ArnoldProblemBook}.

In this work we prove the following Theorem.

\begin{thm}\label{maintheorem1}
Assume that two events $x,y$ in a globally hyperbolic spacetime $\ss$ are causally related and the universal cover $\widetilde M$ of a Cauchy surface $M$ of $\ss$ is compact but does {\bf not} have integral cohomology ring of a compact rank one symmetric space (CROSS). Then the Legendrian link $(\mathfrak S_x, \mathfrak S_y)$ is nontrivial.
\end{thm}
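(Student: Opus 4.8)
The plan is to argue by contradiction and to reuse the scheme of our earlier proof in the non-compact case \cite[Theorem A]{ChernovNemirovskiGAFA}, \cite[Theorem 10.4]{ChernovNemirovskiGT}, replacing the one place where non-compactness of $\widetilde M$ entered---the exclusion of non-constant non-negative Legendrian loops of a fiber---by the strengthened contact Bott-Samelson theorem recorded in Remark~\ref{BSFLS}. Relabelling the two events if necessary, I assume the causal relation is $y\in J^+(x)$, and I suppose toward a contradiction that the Legendrian link $(\mathfrak S_x,\mathfrak S_y)$ is trivial, i.e.\ Legendrian isotopic to the link of two fibers of $ST^*M$.

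First I would extract a non-negative isotopy from causality. Moving the event $x$ to the future along a future-directed causal curve ending at $y$ makes its sky sweep out a non-negative Legendrian isotopy $\mathfrak S_x\rightsquigarrow\mathfrak S_y$ in $\mathfrak N=ST^*M$; this monotonicity of skies under causal displacement is the mechanism underlying the whole circle of ideas, and since $x\neq y$ the isotopy is non-constant. Using the Bernal-Sanchez splitting $h:M\times\R\to\ss$, I would moreover push $x$ and $y$ along the timelike fibers $h(\,\cdot\,\times\R)$ onto the Cauchy surface $M=h(M\times 0)$, so that each sky becomes an honest fiber sphere $S_a,S_b$ and the accompanying isotopies acquire a definite sign; this reduces everything to a statement about fibers.

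The heart of the matter is to convert the one-sided causal isotopy, together with the triviality hypothesis, into a non-constant non-negative Legendrian isotopy of a single fiber to itself. The triviality of $(\mathfrak S_x,\mathfrak S_y)$ provides an ambient Legendrian isotopy carrying the pair to a standard fiber link, which lets the return leg of the loop be performed in the complement of $\mathfrak S_x$ without ever forcing the two spheres to meet; passing to the universal cover $ST^*\widetilde M$, where fibers lift to fibers and the sign of an isotopy is preserved, one organizes the forward causal leg and this return leg into a closed, non-constant, non-negative Legendrian loop of a fiber. By the strengthened Bott-Samelson theorem of Remark~\ref{BSFLS}, such a loop forces $\widetilde M$ to be compact with the integral cohomology ring of a CROSS, contradicting the hypothesis of Theorem~\ref{maintheorem1}. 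Hence $(\mathfrak S_x,\mathfrak S_y)$ is nontrivial.

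I expect the decisive difficulty to be exactly this closing-up step. The forward leg is non-negative by causality, but the return leg supplied by triviality must be arranged so that the concatenation stays non-negative throughout, which is cleanest to control after lifting to the ordered universal cover and phrasing the conclusion as the existence of a non-negative non-constant path between two distinct lifts of one and the same fiber. One must also verify that the loop is genuinely non-constant---this follows from $x\neq y$ together with the fact that distinct events have distinct skies---and check that the sign conventions survive the reduction from skies to fibers. Once the loop is produced the appeal to Remark~\ref{BSFLS} is immediate, so essentially all the work is in this reduction, and it is for this reason that the non-compact argument transfers with only the orderability input changed.
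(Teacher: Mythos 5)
Your overall skeleton agrees with the paper's: argue by contradiction, use the causal relation to get a non-negative Legendrian isotopy $\mathfrak S_x\to\mathfrak S_y$ (\cite[Proposition 4.2]{ChernovNemirovskiGAFA}), use the triviality hypothesis and the Legendrian isotopy extension theorem, manufacture a non-constant non-negative Legendrian isotopy of a fiber of $ST^*M$ to itself, and invoke Remark~\ref{BSFLS}. But the step you yourself single out as ``the decisive difficulty'' --- producing a \emph{non-negative} return leg so that the concatenation is a non-negative loop --- is genuinely missing, and the mechanisms you propose do not supply it. The trivializing isotopy carries no sign constraint whatsoever, and neither performing the return ``in the complement of $\mathfrak S_x$'' nor lifting to $ST^*\widetilde M$ turns an unsigned Legendrian isotopy into a non-negative one; likewise your Bernal--Sanchez reduction (pushing $x,y$ along timelike fibers onto $M$) even introduces a wrong-sign move, since pushing $y$ \emph{down} to the Cauchy surface moves its sky non-positively. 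So as written the proposal does not close the loop.

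The paper's resolution is a symmetry trick that never uses the trivializing isotopy as a leg of the loop at all. By isotopy extension, triviality yields a contactomorphism $\Psi$ with $\Psi(\mathfrak S_x\sqcup\mathfrak S_y)=F\sqcup F'$ for two distinct fibers $F,F'$; since (co-orientation preserving) contactomorphisms preserve the sign of Legendrian isotopies, $\Psi$ transports the causal non-negative isotopy to a non-negative isotopy $F\to F'$. Now the two-fiber link is \emph{symmetric}: because $M$ is connected, the link $F\sqcup F'$ is Legendrian isotopic to $F'\sqcup F$ (slide the two base points past each other and lift to $ST^*M$), so isotopy extension gives a contactomorphism exchanging $F$ and $F'$. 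Applying that contactomorphism to the very same non-negative isotopy $F\to F'$ produces a non-negative isotopy $F'\to F$; concatenating the two gives a non-negative loop of $F$, non-constant simply because it passes through $F'\neq F$ (no appeal to ``distinct events have distinct skies'' is needed). One should also first set aside the case where $x,y$ lie on a common null geodesic, where $\mathfrak S_x\cap\mathfrak S_y\neq\emptyset$ and the configuration is singular, hence not isotopic to a trivial link. With the symmetry trick inserted at your closing-up step, your argument becomes the paper's proof; without it, the proof is incomplete.
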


\begin{proof}
The beginning of the proof follows the one of our~\cite[Theorem A]{ChernovNemirovskiGAFA}. 

If $x,y$ are on the same null geodesic then the Legendrian link $(\mathfrak S_x, \mathfrak S_y)$ has a double point and hence is singular. So we do not have to consider this case.

Without the loss of generality we can assume that $y\in J^+(x)$ and hence $\mathfrak S_y$ is connected to $\mathfrak S_x$ by a non-negative Legendrian isotopy,~see~\cite[Proposition 4.2]{ChernovNemirovskiGAFA}. 

Suppose that $\mathfrak S_x$ and $\mathfrak S_y$ are Legendrian unlinked, i.\,e.,
that the link $\mathfrak S_x, \mathfrak S_y$ is Legendrian isotopic to
the link $F\sqcup F'$ formed by  two different fibers of $ST^*M$.
By the Legendrian isotopy extension theorem,
there exists a contactomorphism $\Psi:ST^*M\to ST^*M$ such that
$\Psi(\mathfrak S_x\sqcup\mathfrak S_y)=F\sqcup F'$.
Hence, we get a non-negative Legendrian isotopy
connecting two different fibres 
$F$ and $F'$ of $ST^*M$. But the Legendrian link $F\sqcup F'$ is symmetric, i.\,e., it is Legendrian isotopic to $F'\sqcup F$. Hence there exists a contactomorphism of $ST^*M$ exchanging the two link components and we also have a non-negative Legendrian isotopy connecting $F'$ to $F.$

Composing the non-negative Legendrian isotopy from $F$ to $F'$ and from $F'$ to $F$ we get a non-constant non-negative Legendrian isotopy from a fiber of $ST^*M$ to itself.

Finally Remark~\ref{BSFLS} says that $\widetilde M$ has the integral cohomology ring of a CROSS. This contradicts to our assumptions. 

\end{proof}

\begin{rem}
Let $(M, \overline g)$ be a Riemann manifold having a point $x$ and a positive number $\ell$ such that all the unit speed geodesics from $x$ return back to $x$ in time $\ell$, then $(M, \overline g)$ is called a $Y^x_{\ell}$ Riemann manifold. The result of B\'erard-Bergery~\cite[Theorem 7.37]{Besse},~\cite{BerardBergery} says that the universal cover $\widetilde M$ in this case is compact and the  rational cohomology ring of $\widetilde M$ has exactly one generator. Clearly the co-geodesic flow on $M$ gives a positive Legendrian isotopy of the Legendrian sphere fiber of $ST^*M$ over $x$ to itself. So the Bott-Samelson~\cite{BottSamelson} type result of Frauenfelder, Labrousse and Schlenk~\cite[Theorem 1.13]{FLS} says more, namely that $\widetilde M$ has the integral cohomology ring of a CROSS.

In~\cite[Example 10.5]{ChernovNemirovskiGT} we observed that when $(M, \overline g)$ is a $Y^x_{\ell}$ Riemann manifold, then causality in the globally hyperbolic $(M\times\R, \overline g\oplus -dt^2)$ is {\bf not} equivalent to Legendrian linking of sphere of light rays. In particular, causality is not equivalent to Legendrian linking in the case where $(M, \overline g)$ is a
metric quotient of the standard sphere. The Thurston Elliptization Conjecture (solved as the part of the Thurston Geometrization Conjecture) by Perelman~\cite{Perelman1, Perelman2, Perelman3} says that every $3$-dimensional $M$ whose universal cover is compact (and hence is  the sphere by the Poincare conjecture) is a metric quotient of the sphere. {\bf  So the results of Theorem~\ref{maintheorem1} are new and interesting only for spacetimes of dimension greater than four. \/}
\end{rem}

It is not currently known whether every compact simply connected manifold whose integer cohomology ring is the one of a CROSS admits a $Y^x_{\ell}$ Riemann metric. So we make the following Conjecture.

\begin{conjecture}
Assume that a globally hyperbolic spacetime $\ss$ is such that one can {\bf not} put a $Y^x_{\ell}$ Riemann metric on its Cauchy surface $M.$ Then two events $x,y\in\ss$ are causally related if and only if the Legendrian link $(\mathfrak S_x, \mathfrak S_y)$ is non-trivial.
\end{conjecture}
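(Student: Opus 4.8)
The plan is to prove the contrapositive: assume that the Legendrian link $(\mathfrak S_x, \mathfrak S_y)$ is trivial and derive that $\widetilde M$ must carry (cohomological) CROSS structure, which under our hypothesis forces the conclusion. First I would dispose of the degenerate case where $x$ and $y$ lie on a common null geodesic, since then the two Legendrian spheres share a point and the link is singular rather than trivial; this case contributes nothing. So I may assume $x,y$ are distinct and, after possibly swapping them, that $y \in J^+(x)$.

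The key geometric input is that causal relation produces a non-negative Legendrian isotopy between the two spheres of light rays. Concretely, when $y \in J^+(x)$ there is a non-negative Legendrian isotopy carrying $\mathfrak S_x$ to $\mathfrak S_y$, by \cite[Proposition 4.2]{ChernovNemirovskiGAFA}. Next I would use the triviality assumption: if the link is Legendrian isotopic to a pair $F \sqcup F'$ of distinct fibers of $ST^*M$, then by the Legendrian isotopy extension theorem there is a contactomorphism $\Psi$ of $ST^*M$ sending $\mathfrak S_x \sqcup \mathfrak S_y$ to $F \sqcup F'$. Transporting the non-negative isotopy through $\Psi$ yields a non-negative Legendrian isotopy from the fiber $F$ to the distinct fiber $F'$.

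The crucial step is to upgrade this into a non-negative Legendrian isotopy of a single fiber \emph{to itself} that is non-constant. For this I would exploit the symmetry of the fiber link: $F \sqcup F'$ is Legendrian isotopic to $F' \sqcup F$, so there is a contactomorphism of $ST^*M$ exchanging the two components, and hence also a non-negative Legendrian isotopy running from $F'$ back to $F$. Concatenating the isotopy $F \rightsquigarrow F'$ with the isotopy $F' \rightsquigarrow F$ produces a non-negative Legendrian self-isotopy of $F$; since it passes through the distinct fiber $F'$ it is non-constant. Finally, the strengthened contact Bott-Samelson theorem recorded in Remark~\ref{BSFLS} converts a non-constant non-negative self-isotopy of a fiber into the conclusion that $\widetilde M$ is compact with the integral cohomology ring of a CROSS, contradicting the hypothesis.

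I expect the main obstacle to be the passage, via the isotopy extension theorem, from a Legendrian isotopy of the \emph{link} to a contactomorphism that carries the non-negative isotopy over cleanly; one must be careful that non-negativity is a property preserved under a (contactomorphism-induced) reparametrization and under concatenation, and that the symmetry contactomorphism interacts correctly with co-orientation so the concatenated isotopy is genuinely non-negative rather than merely a Legendrian homotopy. The remaining delicate point is ensuring non-constancy, which is why I pass explicitly through the second fiber $F' \neq F$ rather than attempting a naive loop. Everything else is an application of the quoted results, so the argument is short.
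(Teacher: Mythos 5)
You have a genuine gap, and it sits exactly at the final sentence of your argument. The statement you are trying to prove is stated in the paper as a \emph{conjecture}, not a theorem, and the paper offers no proof of it; what your argument actually establishes is Theorem~\ref{maintheorem1}, whose hypothesis is that $\widetilde M$ is compact but does \textbf{not} have the integral cohomology ring of a CROSS. The conjecture's hypothesis is weaker: it only assumes that $M$ admits no $Y^x_{\ell}$ Riemann metric. Your chain of deductions (singular case disposed of, non-negative isotopy from causal relation via \cite[Proposition 4.2]{ChernovNemirovskiGAFA}, isotopy extension, symmetry of the fiber link, concatenation, strengthened Bott--Samelson from Remark~\ref{BSFLS}) correctly yields that $\widetilde M$ is compact with the integral cohomology ring of a CROSS. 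But this does \emph{not} contradict the hypothesis ``$M$ carries no $Y^x_{\ell}$ metric.'' To get a contradiction you would need the implication: every $M$ whose universal cover is compact with CROSS integral cohomology ring admits a $Y^x_{\ell}$ metric. That implication is an open problem --- the paper says so explicitly just before stating the conjecture (``It is not currently known whether every compact simply connected manifold whose integer cohomology ring is the one of a CROSS admits a $Y^x_{\ell}$ Riemann metric''), and this open problem is precisely why the statement is a conjecture rather than a corollary of Theorem~\ref{maintheorem1}.

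Put differently: the known results cover two regimes --- if $\widetilde M$ is non-compact, or compact without CROSS cohomology, causality is equivalent to linking (\cite[Theorem A]{ChernovNemirovskiGAFA} together with Theorem~\ref{maintheorem1}); and if $M$ admits a $Y^x_{\ell}$ metric, they are \emph{not} equivalent (\cite[Example 10.5]{ChernovNemirovskiGT}). The conjecture concerns the remaining regime: manifolds $M$ with no $Y^x_{\ell}$ metric whose universal cover is nevertheless compact with CROSS integral cohomology. It is unknown whether such manifolds exist, and if they do, your argument gives no information about them, since the Bott--Samelson conclusion is consistent with the hypothesis there. Your proof is a correct reconstruction of the paper's proof of Theorem~\ref{maintheorem1}, but it cannot close the conjecture without resolving that open geometric question.
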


In~\cite[Theorem C]{ChernovNemirovskiGAFA} we proved the following result. Assume that a Cauchy surface $M$ has a cover diffeomorphic to an open domain in $\R^m$ and two events $x,y\in \ss$ are such that 
$y \in J^+(x)$ but $x,y$ do not belong to a common light geodesic. Then the Legendrian links  $\mathfrak S_x\sqcup\mathfrak S_y$
and $\mathfrak S_y\sqcup\mathfrak S_x$ are not Legendrian isotopic. (In this case there is a non-negative Legendrian isotopy from $\mathfrak S_y$ to $\mathfrak S_x$, but there is no such non-negative Legendrian isotopy from $\mathfrak S_x$ to $\mathfrak S_y$.) One of the key ingredients in the proof is that for such $M$ there is no non-constant non-negative Legendrian isotopy of a fiber of $ST^*M$ to itself. 
In~\cite[Corollary 8.1, Remark 8.2]{ChernovNemirovskiGT} we showed that for the case where the universal cover $\widetilde M$ of $M$ is not a compact manifold there is no non-negative Legendrian isotopy of a fiber of $ST^*M$ to itself. So the result and the proof of~\cite[Theorem C]{ChernovNemirovskiGAFA} immediately generalize to this case.

In this work we use the Bott-Samelson~\cite{BottSamelson} type Theorem of Frauenfelder, Labrousse and Schlenk \cite[Theorem 1.13]{FLS} to get the following result.

\begin{thm}\label{maintheorem2}
Assume that two events $x,y$ in a globally hyperbolic spacetime $\ss$ are causally related and the universal cover $\widetilde M$ of a Cauchy surface $M$ of $\ss$ is compact but does {\bf not} have integral cohomology ring of a compact rank one symmetric space (CROSS). Then the Legendrian link $(\mathfrak S_x, \mathfrak S_y)$ is not Legendrian isotopic to 
$(\mathfrak S_y, \mathfrak S_x)$. 
\end{thm}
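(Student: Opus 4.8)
The plan is to follow the architecture of the proof of Theorem~\ref{maintheorem1}, replacing the triviality of the link by the \emph{asymmetry} of the causal order. As there, the case in which $x$ and $y$ lie on a common null geodesic is set aside: then $\mathfrak S_x\cap\mathfrak S_y\ne\emptyset$, so neither $(\mathfrak S_x,\mathfrak S_y)$ nor $(\mathfrak S_y,\mathfrak S_x)$ is a nonsingular Legendrian link. After renaming $x$ and $y$ if necessary I would assume $y\in J^+(x)$; by~\cite[Proposition 4.2]{ChernovNemirovskiGAFA} this gives a non-negative Legendrian isotopy from $\mathfrak S_y$ to $\mathfrak S_x$.

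I would then argue by contradiction. Suppose $(\mathfrak S_x,\mathfrak S_y)$ is Legendrian isotopic to the swapped ordered link $(\mathfrak S_y,\mathfrak S_x)$. By the Legendrian isotopy extension theorem this isotopy extends to an ambient contact isotopy, whose time-one map is a contactomorphism $\Phi\colon ST^*M\to ST^*M$ that is isotopic to the identity, and hence co-orientation preserving, with $\Phi(\mathfrak S_x)=\mathfrak S_y$ and $\Phi(\mathfrak S_y)=\mathfrak S_x$. A co-orientation preserving contactomorphism satisfies $\Phi^*\alpha_M=f\,\alpha_M$ for some positive function $f$, so it carries non-negative Legendrian isotopies to non-negative ones. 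Applying $\Phi$ to the isotopy from $\mathfrak S_y$ to $\mathfrak S_x$ thus produces a non-negative Legendrian isotopy from $\Phi(\mathfrak S_y)=\mathfrak S_x$ to $\Phi(\mathfrak S_x)=\mathfrak S_y$.

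Concatenating this non-negative isotopy from $\mathfrak S_x$ to $\mathfrak S_y$ with the original one from $\mathfrak S_y$ to $\mathfrak S_x$ yields a non-negative Legendrian isotopy of $\mathfrak S_x$ to itself; it is non-constant because it passes through $\mathfrak S_y\ne\mathfrak S_x$ (distinct events of a globally hyperbolic spacetime have distinct skies). Here the swap hypothesis plays exactly the role that the symmetry of the two-fibre link $F\sqcup F'$ played in the proof of Theorem~\ref{maintheorem1}: it supplies the contactomorphism realizing the reversal. To finish I would recall that the sky of any event is Legendrian isotopic to a fibre of $ST^*M$ (slide $x$ along a timelike curve onto a spacelike Cauchy surface, where its sky is precisely a fibre), apply the isotopy extension theorem once more to obtain a co-orientation preserving contactomorphism $\Theta$ with $\Theta(\mathfrak S_x)=F$, and transport the loop by $\Theta$. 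This gives a non-constant non-negative Legendrian isotopy of the fibre $F$ to itself, whereupon the strengthened contact Bott--Samelson theorem of Remark~\ref{BSFLS} forces $\widetilde M$ to have the integral cohomology ring of a CROSS, contradicting the hypothesis.

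The step I expect to be the crux is the middle one: recognizing that the swap isotopy, being realized by a co-orientation preserving contactomorphism, \emph{reverses} the causal direction and so manufactures a non-negative isotopy from $\mathfrak S_x$ to $\mathfrak S_y$, which the causal relation by itself prohibits. Once this ``anti-causal'' isotopy is available, closing it into a non-constant non-negative loop and pushing that loop onto a fibre are routine; the only points requiring care are that the concatenation be smoothed into a genuine non-negative isotopy and that $\Phi$ and $\Theta$ be co-orientation preserving, which both hold because they arise from contact isotopies based at the identity.
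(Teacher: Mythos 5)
Your proposal is correct and follows essentially the same route as the paper's own proof: assume the swap isotopy exists, extend it to an ambient contactomorphism exchanging the two skies, apply that contactomorphism to the non-negative isotopy furnished by the causal relation to reverse its direction, concatenate to obtain a non-constant non-negative loop of $\mathfrak S_x$, transport it to a fibre of $ST^*M$ via a second application of the isotopy extension theorem, and invoke the strengthened Bott--Samelson result of Remark~\ref{BSFLS} for the contradiction. Your explicit remarks that the extending contactomorphisms are co-orientation preserving (hence carry non-negative isotopies to non-negative ones) and that the loop is non-constant because distinct events have distinct skies are points the paper leaves implicit, but they do not change the argument.
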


\begin{proof}
The beginning of the proof follows the one of our~\cite[Theorem C]{ChernovNemirovskiGAFA}. 

Suppose that the links $\mathfrak S_x, \mathfrak S_y$
and $\mathfrak S_y, \mathfrak S_x$ are Legendrian isotopic.
By the Legendrian isotopy extension theorem, there exists an auto contactomorphism $\Psi$
such that $\Psi(\mathfrak S_x, \mathfrak S_y)=(\mathfrak S_y, \mathfrak S_x)$.
Without loss of generality we assume that $y$ is in the causal past of $x$.
Let $\{\Lambda_t\}_{t\in[0,1]}$ be a non-negative Legendrian isotopy
in $ST^*M$ connecting $\mathfrak S_x$ to $\mathfrak S_y$
provided by~\cite[Proposition 4.2]{ChernovNemirovskiGAFA}. Then $\{\Psi(\Lambda_t)\}_{t\in[0,1]}$ is
a non-negative Legendrian isotopy connecting $\mathfrak S_y$ to~$\mathfrak S_x$.
Composing these two isotopies, we obtain a non-constant non-negative
Legendrian isotopy connecting $\mathfrak S_x$ to itself.

Recall now that  $\mathfrak S_x$ is Legendrian isotopic to a fiber of $ST^*M$.
By the Legendrian isotopy extension theorem, there exists a contactomorphism $\Phi$
taking $\mathfrak S_x$ to that fiber. The Legendrian isotopy connecting $\mathfrak S_x$ to itself
constructed above is taken by $\Phi$ to a non-constant non-negative Legendrian isotopy connecting
the fiber to itself. 

Finally Remark~\ref{BSFLS} says that $\widetilde M$ has the integral cohomology ring of a CROSS. This contradicts to our assumptions.

\end{proof}


\begin{thebibliography}{99999}
\bibitem{ArnoldProblem}
V.~I.~Arnold, {\em Problems,\/} written down by S.~Duzhin, September 1998,
available electronically at {\tt http://www.pdmi.ras.ru/\verb'~'arnsem/Arnold/prob9809.ps.gz}

\bibitem{ArnoldProblemBook}
V.~I.~Arnold, {\em Arnold's problems}, Translated and revised edition of the 2000 Russian original.

\bibitem{BeemEhrlichEasley}
J.~K.~Beem, P.~E.~Ehrlich, K.~L.~Easley: {\em Global Lorentzian
geometry.\/} Second edition. Monographs and Textbooks in Pure and
Applied Mathematics, {\bf 202\/} Marcel Dekker, Inc., New York
(1996)

\bibitem{BerardBergery}
L.~B\'erard-Bergery,
{\em Quelques exemples de vari\'et\'es riemanniennes o\`u toutes les
g\'eod\'esiques issues d'un point sont ferm\'ees et de m\^ eme longueur,
suivis de quelques r\'esultats sur leur topologie},
Ann. Inst. Fourier (Grenoble) {\bf 27} (1977), 231--249.

\bibitem{Besse}
A.~L.~Besse, {\em Manifolds all of whose geodesics are closed},
with appendices by D.~B.~A.~Epstein, J.-P.~Bourguignon,
L.~B\'erard-Bergery, M.~Berger and J.~L.~Kazdan.
Ergebnisse der Mathematik und ihrer Grenzgebiete, 93. Springer-Verlag, Berlin-New York, 1978.


\bibitem{BernalSanchezCausal}
A.~Bernal and M.~Sanchez, {\it Globally hyperbolic spacetimes can be defined as ``causal'' instead of 
``strongly causal''},  Class.~Quant.~Grav.~24 (2007) 745-750

\bibitem{BernalSanchez}
A.~Bernal, M.~Sanchez: {\em On smooth Cauchy hypersurfaces and
Geroch's splitting theorem, \/} Comm.~Math.~Phys.~{\bf 243\/}
(2003), no.~3, 461--470

\bibitem{BernalSanchezMetricSplitting}
A.~Bernal, M.~Sanchez: {\em Smoothness of time functions and the
metric splitting of globally hyperbolic space-times.\/} Comm.~Math.~Phys.~{\bf 257} (2005), no.~1, 43--50.

\bibitem{BernalSanchezFurther}
A.~Bernal and M.~Sanchez: {\em Further results on the smoothability
of Cauchy hypersurfaces and Cauchy time functions.\/} Lett.~Math.~Phys.~{\bf 77} (2006), no.~2, 183--197.


\bibitem{BIL}
A.~Bautista, A.~Ibort, J.~Lafuente,
{\it The contact structure in the space of light rays},
A mathematical tribute to Professor Jos\'e Mar\'ia Montesinos Amilibia, 133--159, Dep. Geom. Topol. Fac. Cien. Mat. UCM, Madrid, (2016) 

\bibitem{BottSamelson}
R.~Bott, H.~Samelson: {\it Applications of the theory of Morse to symmetric spaces.\/} Amer. J. Math. {\bf 80} (1958) 964--1029 

\bibitem{ChernovNemirovskiGAFA}
V.~Chernov, S.~Nemirovski: {\it Legendrian links, causality, and the Low conjecture.\/}  
Geom. Funct. Anal. {\bf 19} (2010), 1320-1333

\bibitem{ChernovNemirovskiGT}
V.~Chernov, S.~Nemirovski: {\it Non-negative Legendrian isotopy in $ST^*M.$\/}  
Geom.~Topol. {\bf 14} (2010), 611-626

\bibitem{ChernovNemirovskiJSG}
V.~Chernov, S.~ Nemirovski: {\it Universal orderability of Legendrian isotopy classes.\/}
J. Symplectic Geom. {\bf 14} (2016), no. 1, 149--170

\bibitem{ChernovNemirovskiRedShift}
V.~Chernov,  S.~Nemirovski: {\it Redshift and contact forms.\/} J. Geom. Phys. {\bf 123} (2018), 379--384


\bibitem{ChernovRudyak}
V.~Chernov, Yu. Rudyak: {\it  Linking and causality in globally hyperbolic space-times.\/} Comm. Math. Phys. {\bf 279} (2008), no. 2, 309--354



\bibitem{FLS}
U.~Frauenfelder, C.~ Labrousse and F.~Schlenk, {\em  Slow volume growth for Reeb flows on spherizations and contact Bott-Samelson theorems.\/} 
J. Topol. Anal. {\bf 7} (2015), no. 3, 407--451


\bibitem{Geroch}
R.~P.~Geroch: {\em Domain of dependence\/}, J.~Math.~Phys., {\bf 11}
(1970) pp.~437--449


\bibitem{HawkingEllis}
S.~W.~Hawking and G.~F.~R.~Ellis, {\em The large scale structure of
space-time.\/} Cambridge Monographs on Mathematical Physics, No.~1,
Cambridge University Press, London-New York, (1973)


\bibitem{KT}
B.~Khesin, S.~Tabachnikov,
{\it Pseudo-riemannian geodesics and billiards},
Adv. Math. {\bf 221} (2009), 1364--1396.


\bibitem{Lo1} R.~J.~Low,
{\it Causal relations and spaces of null geodesics}, DPhil Thesis, Oxford University (1988).

\bibitem{Lo2}
R.~J.~Low, {\em The space of null geodesics}, Proceedings of the
Third World Congress of Nonlinear Analysts, Part 5 (Catania, 2000).
Nonlinear Anal. {\bf 47} (2001), 3005--3017.






\bibitem{NatarioTod}
J.~Natario and P.~Tod: {\em Linking, Legendrian linking and
causality.\/} Proc.~London Math.~Soc.~(3) {\bf 88} (2004), no.~1,
251--272.

\bibitem{PR2}
R.~Penrose, W.~Rindler,
{\it Spinors and spacetime. Vol.~{\rm 2:} Spinor and twistor methods in space-time geometry}, 
Cambridge University Press, Cambridge, 1988.

\bibitem{Perelman1}
G.~Perelman, {\em The entropy formula for the Ricci flow and its
geometric applications}, Preprint {\tt math.DG/0211159}.

\bibitem{Perelman2}
G.~Perelman, {\em Ricci flow with surgery on three-manifolds},
Preprint {\tt math.DG/0303109}.

\bibitem{Perelman3}
G.~Perelman, {\it Finite extinction time for the solutions to the Ricci flow
on certain three-manifolds}, Preprint {\tt math.DG/0307245}.







\end{thebibliography}
\end{document}